\DeclareMathOperator{\Sg}{Sg}
\DeclareMathOperator{\tr}{tr}
\begin{document}

\makeatletter
\newtheorem*{rep@theorem}{\rep@title}
\newcommand{\newreptheorem}[2]{%
\newenvironment{rep#1}[1]{%
 \def\rep@title{#2 \ref{##1}}%
 \begin{rep@theorem}}%
 {\end{rep@theorem}}}
\makeatother

\newtheorem{thm}{Theorem}
\newreptheorem{thm}{Theorem}
\newtheorem{prop}{Proposition}
\newtheorem{cor}{Corollary}
\newtheorem{lem}{Lemma}
\newreptheorem{lem}{Lemma}

\theoremstyle{definition}
\newtheorem{defn}{Definition}
\newtheorem{conj}{Conjecture}
\newtheorem{prob}{Problem}

\theoremstyle{remark}
\newtheorem{rem}{Remark}
\newtheorem{ex}{Example}

\newcommand{\Rho}{\mathrm{P}}
\newcommand{\cA}{\mathcal{A}}
\newcommand{\cB}{\mathcal{B}}
\newcommand{\cS}{\mathcal{S}}
\newcommand{\cM}{\mathcal{M}}
\newcommand{\cF}{\mathcal{F}}
\newcommand{\cG}{\mathcal{G}}
\newcommand{\cH}{\mathcal{H}}
\newcommand{\cP}{\mathcal{P}}
\newcommand{\cR}{\mathcal{R}}
\newcommand{\cV}{\mathcal{V}}
\newcommand{\RR}{\mathbb{R}}
\newcommand{\ZZ}{\mathbb{Z}}
\newcommand{\NN}{\mathbb{N}}
\newcommand{\QQ}{\mathbb{Q}}
\newcommand{\bA}{\mathbb{A}}
\newcommand{\bB}{\mathbb{B}}
\newcommand{\bC}{\mathbb{C}}
\newcommand{\bD}{\mathbb{D}}
\newcommand{\bE}{\mathbb{E}}
\newcommand{\bF}{\mathbb{F}}
\newcommand{\bI}{\mathbb{I}}
\newcommand{\bS}{\mathbb{S}}
\newcommand{\fA}{\mathbf{A}}
\newcommand{\fB}{\mathbf{B}}
\newcommand{\gk}{\kappa}
\newcommand{\gS}{\Sigma}
\newcommand{\gl}{\lambda}
\newcommand{\gt}{\theta}

\newcommand{\dotcup}{\ensuremath{\mathaccent\cdot\cup}}

\title{Chromatic numbers of directed hypergraphs with no ``bad'' cycles}
\author{Zarathustra Brady}
\address{Department of Mathematics \\ Massachusetts Institute of Technology \\ 77 Massachusetts Avenue \\ Building 2, Room 350B\\ Cambridge, MA 02139-7307}
\email{notzeb@mit.edu}
\maketitle

\begin{abstract} Imagine that you are handed a rule for determining whether a cycle in a digraph is ``good'' or ``bad'', based on which edges of the cycle are traversed in the forward direction and which edges are traversed in the backward direction. Can you then construct a digraph which avoids having any ``bad'' cycles, but has arbitrarily large chromatic number?

We answer this question when the rule is described in terms of a finite state machine. The proof relies on Ne{\v s}et{\v r}il and R\"odl's structural Ramsey theory of posets with a linear extension. As an application, we give a new proof of the Loop Lemma of Barto, Kozik, and Niven in the special case of bounded width algebras.
\end{abstract}

\section{Setup}

Notation: $[k]$ stands for the set $\{1, ..., k\}$, $\cP(S)$ is the power-set of $S$, and $\Delta_S$ is the diagonal of $S\times S$. If $S$ is a set and $n$ is a natural number, then $\binom{S}{n}$ is the set of $n$-element subsets of $S$. Also, $P_n$ is a directed path of length $n$, that is, the digraph $([n+1],E)$ with $E = \{(i,i+1) \mid 1 \le i \le n\}$.

\begin{defn} A \emph{directed hypergraph} of uniformity $k$ is a pair $(V,E)$ with $E \subseteq V^k$. The \emph{chromatic number} of a directed hypergraph is the chromatic number of the associated undirected hypergraph, that is, the least number $\chi$ such that there exists a function $f:V\rightarrow [\chi]$ such that for each edge $e \in E$, not all of $f(e_1), ..., f(e_k)$ are equal. We'll assume that no edge of $E$ has any two coordinates equal to avoid annoying technical details which end up not mattering.
\end{defn}

\begin{defn} A $k$-\emph{machine} $\cM$ is a tuple $\cM = (S,f,\cB)$ where $S$ is a finite set of \emph{states}, $f : S\times [k]^2 \rightarrow \cP(S)$ is a \emph{transition function}, and $\cB \subseteq S\times S$ is the set of \emph{bad transitions}. We say that the $k$-machine $\cM$ is \emph{deterministic} if the value of $f(s,(i,j))$ always has size at most one, and is empty for $i = j$. If $\cM$ is deterministic, we abuse notation and think of $f$ as a function $f : S\times ([k]^2\setminus \Delta_{[k]}) \rightarrow S \cup \{\emptyset\}$, and think of $\emptyset$ as a special ``accepting'' state.
\end{defn}

\begin{defn} A \emph{cycle} of a $k$-uniform directed hypergraph $\cH = (V,E)$ is a sequence $c = (v_0, e_1, v_1, ..., e_n, v_n)$ with $v_n = v_0$, and $v_{i-1}, v_i \in \{(e_i)_1, ..., (e_i)_k\}$ for each $i$. We define $|c| = n$, and we define the \emph{trace} of $c$ by $\tr(c,i) = (a,b)$ where $(e_i)_a = v_{i-1}, (e_i)_b = v_i$. We say that the cycle $c$ of $\cH$ is $\cM$-\emph{bad} if there is a sequence of states $s_0, ..., s_n \in S$ such that for each $i$ we have
\[
s_i \in f(s_{i-1}, \tr(c,i)),
\]
and such that
\[
(s_0,s_n) \in \cB.
\]
We say that $\cH$ is $\cM$-\emph{good} if $\cH$ has no $\cM$-bad cycles.
\end{defn}

\begin{prob}\label{main-prob} Given a $k$-machine $\cM$, determine whether there exist $\cM$-good $k$-uniform directed hypergraphs $\cH$ of arbitrarily large chromatic number.
\end{prob}

\begin{ex} Let $k = 2$, and consider the deterministic $2$-machine $\cM = (\{s,t,u,v\},f,\{s\}\times\{t,v\})$, with $f$ given by $f(s,(1,2)) = t, f(t,(1,2)) = t, f(t,(2,1)) = u, f(u,(1,2)) = v$, and all other values of $f$ are $\emptyset$. Then a directed graph $\cG$ is $\cM$-good if and only if $\cG$ is the Hasse diagram of a poset.

It's well-known that Hasse diagrams can have arbitrarily large chromatic number (\cite{blanche}, \cite{coloring-lattices}, \cite{ramsey-lattices}, \cite{hasse-eyebrows}). An explicit poset whose Hasse diagram has chromatic number $n$ is the poset $(\binom{[2^n]}{2},\preceq)$ with $\{a,b\} \preceq \{c,d\}$ when $\max(a,b) \le \min(c,d)$ \cite{hasse-explicit}.
\end{ex}

\section{Warm up: cycling $k$-machines}

\begin{defn} A \emph{cycling} $k$-\emph{machine} $\cM$ is a $k$-machine $(S,f,\cB)$ such that $\cB = \Delta_S$.
\end{defn}

In the context of cycling $k$-machines, we only consider a cycle $c$ to be $\cM$-bad if it has $|c| > 0$. It's easy to modify a cycling $k$-machine such that it handles cycles of length $0$ correctly (while at most doubling the number of states), but this makes the definition clunky.

\begin{defn} If $\cM = (S,f,\Delta_S)$ is a cycling $k$-machine, we say that $\prec$ is an $\cM$-\emph{compatible order} on $S\times [k]$ if it is a total order such that the induced orderings on $S\times \{i\}$ agree for all $1 \le i \le k$, and for each $(s,i), (t,j) \in S\times [k]$ such that $t \in f(s,(i,j))$, we have $(s,i) \prec (t,j)$.
\end{defn}

\begin{thm}\label{cycling} If $\cM = (S,f,\Delta_S)$ is a cycling $k$-machine, then there exist $\cM$-good $k$-uniform directed hypergraphs $\cH$ of arbitrarily large chromatic number if and only if there is an $\cM$-compatible order $\prec$ on $S \times [k]$. Furthermore, if the chromatic number is bounded then it is bounded by $|S|!$.
\end{thm}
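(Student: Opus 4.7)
To any $\cM$-good hypergraph $\cH = (V, E)$ I attach the auxiliary digraph $\mathcal{D}$ on vertex set $V \times S$, with an arc from $(v_a, s)$ to $(v_b, t)$ for every hyperedge $(v_1, \ldots, v_k) \in E$, every $a, b \in [k]$, and every $s \in S$, $t \in f(s, (a, b))$. Because $\cM$ is cycling (its bad set is $\Delta_S$), a directed cycle in $\mathcal{D}$ projects to an $\cM$-bad cycle in $\cH$, and conversely every $\cM$-bad cycle lifts to a closed walk in $\mathcal{D}$; hence $\cH$ is $\cM$-good precisely when $\mathcal{D}$ is a DAG. Fix any linear extension $\ll$ of $\mathcal{D}$. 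For each $v \in V$, the restriction of $\ll$ to $\{v\} \times S$ is one of the $|S|!$ linear orders on $S$; color $v$ by this order. Suppose a hyperedge $(v_1, \ldots, v_k)$ were monochromatic with common induced permutation $\pi$. Then the rule $(s, a) \prec (t, b) :\iff (v_a, s) \ll (v_b, t)$ defines a total order on $S \times [k]$: its restrictions to the $S \times \{a\}$'s all coincide with $\pi$, and whenever $t \in f(s, (a, b))$ the defining $\mathcal{D}$-arc forces $(v_a, s) \ll (v_b, t)$, so $\prec$ would be $\cM$-compatible---contradicting the standing hypothesis. Hence no hyperedge is monochromatic and $\chi(\cH) \le |S|!$.

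\textbf{Backward direction.} Given an $\cM$-compatible order $\prec$, I need to produce, for each target $r$, an $\cM$-good hypergraph with chromatic number exceeding $r$. The most direct starting point is $V = [N]$ with $N$ large, putting one hyperedge $(u_1, \ldots, u_k)$ in for every $k$-subset $u_1 < \cdots < u_k$; this already has chromatic number $\lceil N/(k-1)\rceil$. $\cM$-goodness would follow if I could chain the compatibility inequalities $(s_{i-1}, a_i) \prec (s_i, b_i)$ supplied along a putative $\cM$-bad cycle into a strict monotone increase contradicting $s_n = s_0$. The chain closes cleanly whenever consecutive hyperedges place their shared vertex at the same position (i.e., $b_i = a_{i+1}$); but in this identity-oriented construction a single vertex of $V$ can occupy several different positions across the hyperedges containing it, and moreover $\prec$ can induce different position-orderings for different states in $S$, so the bridge $(s_i, b_i) \leftrightarrow (s_i, a_{i+1})$ between successive hyperedges may run against $\prec$, and genuine $\cM$-bad cycles can be assembled out of hyperedges that individually respect $\prec$.

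\textbf{Main obstacle.} Closing these position-gaps is the main technical content of the backward direction, and this is where (as the abstract signals) I would invoke the Ne\v{s}et\v{r}il--R\"odl structural Ramsey theorem for finite posets with a linear extension. The plan is to package $\prec$ as a finite ordered template, apply Ramsey to pass to a large highly homogeneous ordered structure, and read off from it an $\cM$-good hypergraph $\cH_r$ whose auxiliary $\mathcal{D}$ admits a single globally consistent linear extension pieced together from many local copies of $\prec$; simultaneously, a monochromatic-substructure argument using the Ramsey property forces any $r$-coloring of vertices of $\cH_r$ to admit a monochromatic hyperedge, yielding $\chi(\cH_r) > r$. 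The hardest step is the choice of Ramsey class: it must be rich enough to carry the chromatic-number construction (so that Ramsey applied to a single hyperedge as target substructure gives the lower bound), yet constrained enough that $\cM$-goodness persists as an ordered-structural property preserved by Ramsey amalgamation, which is exactly the flexibility in $\prec$ that the agreement-on-positions condition fails to pin down.
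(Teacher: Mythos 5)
Your forward direction matches the paper's argument: build the auxiliary digraph on $V\times S$, observe that $\cM$-goodness is equivalent to acyclicity (since $\cB = \Delta_S$), take a linear extension, color each $v$ by the induced order on $\{v\}\times S$, and extract an $\cM$-compatible order from any monochromatic hyperedge. That part is sound.

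The backward direction, however, is not a proof --- it is a diagnosis of why the obvious construction fails, followed by a gesture toward Ne\v{s}et\v{r}il--R\"odl. Two things are wrong with that gesture. First, you never actually produce the hypergraph, so the argument is genuinely incomplete. Second, and more importantly, you reach for the wrong tool: structural Ramsey theory for posets with a linear extension is what the paper needs for the \emph{general} $k$-machine case (Theorem \ref{general}), where an order system rather than a total order is involved. For cycling machines, the paper gets by with classical Ramsey's theorem for hypergraphs, and the construction is much more elementary than you suggest. The key idea you are missing is to change the vertex set: rather than $V = [N]$ with each vertex playing many different positional roles, take $V = \binom{\NN}{|S|}$, so that a vertex is itself an $|S|$-element set and carries its own internal ``$S$-ordering'' (via the common restriction of $\prec$ to each $S\times\{i\}$). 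An edge of $\cH$ is then a $k$-tuple of disjoint $|S|$-sets $(\{a_{1s}\}_s, \ldots, \{a_{ks}\}_s)$ whose elements are interleaved in $\NN$ exactly according to $\prec$ on $S\times[k]$, i.e.\ $a_{is} < a_{jt} \iff (s,i) \prec (t,j)$. With this choice, the map sending $(A,s)$ to the element of $A$ in the position dictated by $\prec$ turns the auxiliary digraph into one whose every arc strictly increases in $\NN$, so there are no directed cycles and $\cH$ is $\cM$-good. Unbounded chromatic number then follows from Ramsey on $|S|$-subsets of $\NN$: any finite coloring admits a set $C$ of size $k|S|$ whose $|S|$-subsets are monochromatic, and the unique edge partitioning $C$ is monochromatic. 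This construction is precisely what closes the ``position-gap'' problem you identified --- by baking the state coordinate into the vertex, the bridge between consecutive hyperedges is forced to respect $\prec$ automatically. Without supplying it (or something equivalent), the backward direction remains open in your write-up.
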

\begin{proof} First we show the necessity. Let $\cH = (V,E)$ be an arbitrary $k$-uniform directed hypergraph. We define an auxiliary digraph $\cG$ with vertex set $V\times S$ and edge set given by
\[
\{((a,s),(b,t)) \mid \exists e \in \cH,\ i, j \in [k]\text{ s.t. }e_i = a, e_j = b, t \in f(s,(i,j))\}.
\]
Any directed cycle in $\cG$ corresponds to an $\cM$-bad cycle in $\cH$, and vice-versa. Therefore if $\cH$ is $\cM$-good, then $\cG$ is a directed acyclic graph, so there exists a total order $\prec$ on $\cG$ such that if $((a,s),(b,t))$ is an edge of $\cG$ then $(a,s) \prec (b,t)$. Color the vertex $v \in V$ by the induced ordering $\prec\mid_{\{v\}\times S}$. If the chromatic number of $\cH$ is greater than $|S|!$, then there must exist an edge $e \in E$ such that $e_1, ..., e_k$ all have the same induced orderings. We now define the ordering $\prec$ on $S\times [k]$ by $(s,i) \prec (t,j)$ if and only if $(e_i,s) \prec (e_j,t)$, and note that this is an $\cM$-compatible order on $S\times [k]$.

Now we show the sufficiency. Fix an $\cM$-compatible order $\prec$ on $S\times [k]$. We define $\cH = (V,E)$ by taking $V = \binom{\NN}{|S|}$, and defining $E$ by
\[
E = \{(\{a_{11}, ..., a_{1|S|}\}, ..., \{a_{k1}, ..., a_{k|S|}\}) \mid a_{is} < a_{jt} \iff (s,i) \prec (t,j)\}.
\]
It's easy to show that this $\cH$ is $\cM$-good (the auxiliary digraph $\cG$ has vertices corresponding to elements of vertices of $\cH$, with the correspondence determined by the restriction of $\prec$ to any $S\times \{i\}$, and every edge of $\cG$ is increasing under the total ordering from $\NN$). Finally, the chromatic number of $\cH$ is infinite by Ramsey's theorem for hypergraphs (if we color the $k$-subsets of $\NN$ by finitely many colors, then there is some subset $C$ of $\NN$ of size $k|S|$ such that $\binom{C}{k}$ is monochromatic, and there is an edge $e \in E$ with $\cup_{i=1}^k e_i = C$).
\end{proof}

\begin{ex}\label{bounded-counter} Consider the family of cycling $2$-machines $\cM_n$, with $\cM_n = (\{0, ..., n\},f,\Delta_{\{0, ..., n\}})$ and $f(i,(1,2)) = \min(i+1,n)$ and $f(i,(2,1)) = i-2$ if $i \ge 2$, $f(0,(2,1)) = f(1,(2,1)) = \emptyset$. For $n \ge 2$, any $\cM_n$-good digraph must be the Hasse diagram of a poset (but the converse is not true). We'll use Theorem \ref{cycling} to show that for each $n$, there is an $\cM_n$-good digraph of infinite chromatic number. We just have to construct an $\cM_n$-compatible order $\prec$ on $\{0, ..., n\} \times [2]$. We take the restriction of $\prec$ to $\{0, ..., n\}\times\{1\}$ to be the reverse of the usual ordering (and the same for $\{0, ..., n\} \times \{2\}$), and take $(i,1) \prec (j,2)$ if and only if $i > j-2$.
\end{ex}

The following type of digraph, parametrized by a real number $\alpha > 1$, acts like a limiting case of Example \ref{bounded-counter} in the case $\alpha = 2$.

\begin{defn} Let $\alpha > 1$. We say that a digraph is $\alpha$-\emph{balanced} if every cycle which has $k$ forward edges has strictly less than $\alpha k$ backwards edges.
\end{defn}

\begin{prop} A digraph is $2$-balanced if and only if it is $\cM_n$-good for every $n$, with $\cM_n$ defined as in Example \ref{bounded-counter}.
\end{prop}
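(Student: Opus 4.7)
The plan is to characterize which cycles can be $\cM_n$-bad for some $n$ in terms of the numbers $k$ of forward (trace $(1,2)$) and $\ell$ of backward (trace $(2,1)$) edges. Recall that in $\cM_n$, a forward edge sends state $s$ to $\min(s+1,n)$, and a backward edge sends $s$ to $s-2$, failing if $s<2$.

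For the direction 2-balanced $\Rightarrow$ $\cM_n$-good for every $n$, I would argue by contradiction. Suppose some cycle $c$ with $k$ forward and $\ell$ backward edges admits an $\cM_n$-bad state sequence $s_0,\ldots,s_L$. Summing the state increments around the cycle, each of the $k$ forward edges contributes $+1$ except for the ``wasted'' ones (those applied when $s=n$), which contribute $0$, while each of the $\ell$ backward edges contributes $-2$. Since $s_L=s_0$, this forces $(\text{wasted F's}) = k-2\ell \geq 0$, and in particular $k \geq 2\ell$. But 2-balancedness applied to the reverse orientation of $c$ gives $k < 2\ell$, a contradiction.

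For the converse, I would prove the contrapositive: if $\cG$ is not 2-balanced, then after possibly reversing, some cycle $c$ has $k \geq 2\ell$, and I want to exhibit an $\cM_n$-bad realization for some $n$. The case $\ell=0$ is trivial, since then $c$ is a directed cycle and the run starting from state $n$ stays at $n$ throughout. Otherwise, I would reparameterize via $t_i = n - s_i$, turning forward edges into $t \mapsto \max(t-1,0)$ and backward edges into $t \mapsto t+2$. Starting from $t_0=0$, a short induction shows $t_i = T_i - \min_{j \leq i} T_j$, where $T_i$ is the partial sum of the cyclic weight sequence with weight $-1$ for F and $+2$ for B. The closing condition $t_L = t_0 = 0$ then reduces to $\min_i T_i = T_L = 2\ell - k$.

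The key step is a cycle-lemma-style choice of starting point: rotating the cyclic sequence to begin just after a position attaining the global minimum of the partial sums makes the minimum of the rotated partial sums occur at the final index, which is exactly what is needed. Taking $n$ slightly larger than $\max_i t_i$ (enough to keep backward transitions legal, say $\max_i t_i + 2$) then gives an $\cM_n$-bad realization of $c$, contradicting $\cM_n$-goodness. I expect this cyclic-shift argument to be the main subtle step; once the right rotation is identified, the remaining verifications of state bounds are routine bookkeeping.
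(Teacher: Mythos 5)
The paper states this proposition without proof, so there is no argument of the authors' to compare against; your proof is correct and fills that gap. In the forward direction the net-increment count is right: around an $\cM_n$-bad cycle the state returns to itself, so (non-wasted forward steps) $-\,2\ell = 0$, hence $k \ge 2\ell$, which contradicts $2$-balancedness applied to the reversed traversal of the same cycle (which demands $k < 2\ell$). For the converse, the Lindley identity $t_i = T_i - \min_{j\le i} T_j$ is valid here because the backward increment $+2$ is positive, so the reflection at $0$ only ever engages on forward steps; and the cycle-lemma rotation to begin just after a global minimum of the original partial sums does force $\min_{j\le L} T'_j = T'_L$: writing the rotated sums as $T'_i = T_{m+i} - T_m$ for $i \le L-m$ and $T'_i = T_L - T_m + T_{i-(L-m)}$ for $i > L-m$, the first range gives $T'_i \ge 0 \ge T_L$ and the second gives $T'_i \ge T_L = T'_L$, using $T_L = 2\ell - k \le 0$ and that $T_m$ is the global minimum. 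Taking $n = \max_i t_i + 2$ then keeps every backward transition legal ($s_{i-1} \ge 2$), so the rotated cycle is $\cM_n$-bad, as needed. One cosmetic remark: the separate treatment of $\ell = 0$ is unnecessary, since in that case all weights are $-1$, $T_i = -i$ is monotone, and the general formula already gives $t_i \equiv 0$ with $\min_{j\le L} T_j = T_L$ automatically.
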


\begin{thm} Any $\alpha$-balanced digraph $\cG = (V,E)$ has chromatic number at most $\lceil \alpha\rceil +1$.
\end{thm}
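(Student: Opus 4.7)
The plan is to construct a real-valued ``height'' function $\phi: V \to \RR$ with the property that for every edge $(u,v) \in E$ we have $1 \le \phi(u) - \phi(v) \le \alpha$, and then to color each vertex $v$ by $c(v) = \lfloor \phi(v) \rfloor \bmod (\lceil \alpha \rceil + 1)$.

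To construct $\phi$, I would form an auxiliary weighted digraph on vertex set $V$ in which each edge $(u,v) \in E$ contributes an arc $u \to v$ of weight $-\alpha$ and an arc $v \to u$ of weight $+1$. The inequalities demanded of $\phi$ are exactly $\phi(y) - \phi(x) \ge w(x,y)$ for every arc $(x,y)$ of this auxiliary digraph, so by the standard longest-walk / Bellman--Ford potential theorem, such a $\phi$ exists as soon as no directed cycle in the auxiliary digraph has positive total weight. A directed cycle of the auxiliary digraph is just a cycle of $\cG$ traversed in a specific direction using, say, $k$ forward and $b$ backward edges, and its weight is $b - \alpha k$, which is strictly negative by the $\alpha$-balanced hypothesis applied to that traversal.

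With $\phi$ in hand, the coloring works by floor/ceiling bookkeeping. From $\phi(u) - \phi(v) \ge 1$ we get $\lfloor \phi(u) \rfloor \ge \lfloor \phi(v) \rfloor + 1$, while from $\phi(u) - \phi(v) \le \alpha$ together with the elementary inequality $\lfloor x + \alpha \rfloor \le \lfloor x \rfloor + \lceil \alpha \rceil$ (applied with $x = \phi(v)$) we get $\lfloor \phi(u) \rfloor \le \lfloor \phi(v) \rfloor + \lceil \alpha \rceil$. Therefore $\lfloor \phi(u) \rfloor - \lfloor \phi(v) \rfloor$ lies in $\{1, \ldots, \lceil \alpha \rceil\}$, which is a nonzero residue class modulo $\lceil \alpha \rceil + 1$, and hence $c(u) \ne c(v)$.

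The main conceptual step is spotting the correct weights $-\alpha$ and $+1$ on the two orientations of each edge, which is what turns the $\alpha$-balanced hypothesis into the statement that every cycle of the auxiliary digraph has negative weight. After that the potential is produced by a classical result and the rest is routine. The only place to be careful is the floor/ceiling bound on $\lfloor \phi(u) \rfloor - \lfloor \phi(v) \rfloor$, which is precisely what pins the number of required colors at $\lceil \alpha \rceil + 1$ rather than at $\lceil \alpha \rceil$.
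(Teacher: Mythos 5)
Your proposal is correct and follows essentially the same route as the paper: you produce a potential satisfying $1 \le \phi(u) - \phi(v) \le \alpha$ on every edge (the paper's $\ell$, defined as the supremum of walk-weights from a base vertex, is exactly the Bellman--Ford solution you invoke, with the sign convention flipped) and then color by residue modulo $\lceil\alpha\rceil+1$. The only presentational difference is that the paper first reduces WLOG to integer $\alpha$ and gets an integer-valued potential directly, whereas you keep $\alpha$ real and handle the rounding with the floor inequality $\lfloor x+\alpha\rfloor \le \lfloor x\rfloor + \lceil\alpha\rceil$; the substance of the argument is identical.
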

\begin{proof} Assume WLOG that $\alpha$ is a whole number and that $\cG$ is connected and finite. Pick some vertex $v_0 \in V$, and for every walk $w$ from $v_0$ to a vertex $v\in V$, we let $\ell(w)$ be the number of forward steps in $w$ minus $\alpha$ times the number of backward steps in $w$. For $v \in V$, we let $\ell(v)$ be the supremum of $\ell(w)$ over all walks $w$ from $v_0$ to $v$. To see that $\ell(v)$ is finite, note that for any walk $w$ containing a cycle, we can delete that cycle to get a walk $w'$ with the same endpoints such that $\ell(w') > \ell(w)$ (by the definition of an $\alpha$-balanced digraph), and that only finitely many of the walks in $\cG$ contain no cycles. Now for any edge $(a,b) \in E$, we have $\ell(b) \ge \ell(a) + 1$, and $\ell(a) \ge \ell(b) - \alpha$, by extending a walk to $a$ or $b$ by one forward or backward step, respectively, so
\[
\ell(a) + 1 \le \ell(b) \le \ell(a) + \alpha.
\]
In particular, we have
\[
(a,b) \in E \;\; \implies \;\; \ell(a) \not\equiv \ell(b) \pmod{\alpha+1},
\]
so coloring the vertices of $\cG$ according to the remainder of $\ell(v) \pmod{\alpha+1}$ finishes the proof.
\end{proof}

\subsection{Hardness of checking for a compatible ordering}

We would like to know how difficult it is to test whether a cycling $k$-machine has a compatible order. Our first result shows that if we allow the uniformity $k$ to vary, then this is NP-complete.

\begin{thm} Checking whether a given deterministic cycling $k$-machine $\cM$ has a compatible order is NP-complete if $k$ is allowed to vary.
\end{thm}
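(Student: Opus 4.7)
The plan is to handle NP-membership and NP-hardness separately. For membership, observe that a compatible order on $S\times[k]$ is a polynomial-size object whose defining constraints (that the orderings induced on the fibers $S\times\{i\}$ coincide, and that $(s,i)\prec(t,j)$ holds for each $t\in f(s,(i,j))$) can be checked in polynomial time; equivalently, one guesses a total order $<_S$ on $S$ together with an interleaving across columns and verifies that the resulting partial order is acyclic.

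For hardness I would reduce from 3-SAT. Given a formula $\phi$ with variables $x_1,\dots,x_n$ and clauses $C_1,\dots,C_m$, I set $S=\{a_i,b_i\mid 1\le i\le n\}$, with the intended meaning that $x_i$ is true iff $a_i<_S b_i$. For a literal $\ell$ write $(a_\ell,b_\ell)=(a_i,b_i)$ when $\ell=x_i$ and $(a_\ell,b_\ell)=(b_i,a_i)$ when $\ell=\neg x_i$, so that $\ell$ is true iff $a_\ell<_S b_\ell$. I allocate $k=3m$ column indices, grouped into pairwise disjoint triples $(c_1^C,c_2^C,c_3^C)$ one per clause $C=\ell_1\vee\ell_2\vee\ell_3$, and set
\[
f(a_{\ell_1},(c_1^C,c_2^C))=b_{\ell_2},\qquad f(a_{\ell_2},(c_2^C,c_3^C))=b_{\ell_3},\qquad f(a_{\ell_3},(c_3^C,c_1^C))=b_{\ell_1},
\]
with $f$ taking value $\emptyset$ elsewhere. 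In the auxiliary digraph on $S\times[k]$ used in the proof of Theorem~\ref{cycling}, these three transition edges form a triangle that is closed into a directed cycle by the intra-column edges $(b_{\ell_j},c_j^C)\to(a_{\ell_j},c_j^C)$ precisely when all three literals evaluate to false; any true literal reverses its column edge and breaks the triangle.

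The remaining steps are to verify correctness in both directions and to check that no unintended cycles appear. If $\phi$ has a satisfying assignment, picking $<_S$ to realize it breaks every clause triangle at a true literal, so the auxiliary digraph is acyclic and a compatible order exists; conversely, any compatible order induces an assignment under which an unsatisfied clause would close its triangle into a directed cycle, contradicting acyclicity. I expect the main obstacle to be ruling out spurious cycles not of the intended form. This is precisely why the $3m$ column indices are chosen to be pairwise disjoint across clauses: transition edges from distinct clauses then live over disjoint triples of columns, so they cannot be chained through intra-column edges, while inside a single clause's three columns the only directed cycle that uses a transition edge is the triangle itself (the three transitions cyclically permute the three columns, and intra-column edges, taken from the total order $<_S$, contribute no cycles on their own). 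Finally, determinism is preserved since distinct clauses use distinct column pairs, and the construction runs in polynomial time with $|S|=2n$ and $k=3m$.
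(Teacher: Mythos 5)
Your proof takes essentially the same approach as the paper's: reduce from 3-SAT, allocate a disjoint triple of columns per clause, and build a cyclic chain of three deterministic transitions so that a compatible order exists iff not all three literals can be simultaneously false. Your encoding of literals via the pair $(a_i,b_i)$ with $x_i$ true iff $a_i <_S b_i$ is just a cosmetic relabeling of the paper's choice to take $S$ to be the set of literals directly (with $a_\ell,b_\ell$ playing the roles of $\ell,\bar\ell$); the three transitions per clause and the ``triangle closes into a directed cycle exactly when the clause is unsatisfied'' argument are identical. The only substantive additions on your side are the explicit NP-membership check and the careful verification that no spurious cycles arise across clauses (because the column triples are disjoint, a cycle using a transition edge must stay within one clause's columns and hence must be the intended triangle), details the paper treats as ``straightforward.'' Both are correct and worth spelling out, but they don't change the method.
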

\begin{proof} We'll reduce from 3-SAT. Suppose we have an instance with variables $V$ and constraints $C$, take $S$ to be the set of literals, take $k = 3|C|$, and number the constraints as $C_1, C_2, ...$. For each constraint $C_i$, we will introduce just three transitions for $\cM$, and we will have all other transitions lead to $\emptyset$. Suppose that $C_i$ is the $\vee$ of the literals $a, b, c$, with negations $\overline{a}, \overline{b}, \overline{c}$. Then the transitions corresponding to $C_i$ are as follows:
\begin{align*}
f(a,(3i-2,3i-1)) &= \overline{b},\\
f(b,(3i-1,3i)) &= \overline{c},\\
f(c,(3i,3i-2)) &= \overline{a}.
\end{align*}
Now, if $\prec$ is an $\cM$-compatible order on $S\times [k]$, then not all three of the inequalities $\overline{a} \prec a$, $\overline{b} \prec b$, $\overline{c} \prec c$ can be true, since these together with the above three transitions imply a directed cycle of inequalities. Thus, if we define the value for the literal $a$ to be true iff $a \prec \overline{a}$, then any $\cM$-compatible order corresponds to a solution to our instance of 3-SAT. Conversely, given a solution to our 3-SAT instance, we can use it to first decide which of the inequalities $a \prec \overline{a}$ should hold, then extend this to an order on $S$, and finally extending this to an $\cM$-compatible order on $S\times [k]$ is straightforward.
\end{proof}

Surprisingly, when $k = 2$ (i.e., in the case of digraphs), testing for a compatible ordering is equivalent to testing whether $P_n$ is $\cM$-good for all $n$.

\begin{thm} If $\cM$ is a cycling $2$-machine, then there are $\cM$-good digraphs having arbitrarily large chromatic number if and only if the directed path $P_n$ is $\cM$-good for all $n$. This can be tested in polynomial time (even if $\cM$ is non-deterministic).
\end{thm}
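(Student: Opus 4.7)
By Theorem~\ref{cycling}, the existence of $\cM$-good digraphs of arbitrarily large chromatic number is equivalent to the existence of an $\cM$-compatible order on $S\times[2]$, so the task reduces to showing this is equivalent to ``$P_n$ is $\cM$-good for every $n$''. The \emph{only if} direction is immediate from Theorem~\ref{cycling} together with the Gallai--Roy theorem: the former produces $\cM$-good digraphs of arbitrarily large chromatic number, the latter produces arbitrarily long directed paths as subgraphs of such digraphs, and $\cM$-goodness passes to subgraphs.

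For the \emph{if} direction, form the labeled graph $\cG^* = (S,E^*)$ with an edge $s\to t$ labeled $+1$ for each $t\in f(s,(1,2))$ and an edge labeled $-1$ for each $t\in f(s,(2,1))$. An $\cM$-bad cycle in $P_n$ is the same data as a nontrivial closed walk in $\cG^*$ whose labels sum to $0$: the state sequence walks in $\cG^*$ while the vertex sequence lies on $\ZZ$ and steps by the current edge label, and any such zero-sum walk fits inside some $P_n$. So the hypothesis becomes: $\cG^*$ has no nontrivial zero-sum closed walk. Inside each strongly connected component $C$ of $\cG^*$ this forces all cycles to have label sums of the same strict sign $\sigma_C\in\{+1,-1\}$, since a positive and a negative cycle in the same SCC could otherwise be spliced together, with $\gcd$-tuned multiplicities and walks connecting them, to yield a zero-sum closed walk.

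To build the order, topologically sort the SCCs as $C_1,\dots,C_m$. Shortest-path LP duality gives, in each $C_k$, a potential $\phi_k:C_k\to\QQ$ satisfying $\phi_k(t)-\phi_k(s)>-\ell\,\sigma_{C_k}$ for every label-$\ell$ edge $s\to t$ inside $C_k$; the feasibility condition is exactly $\sigma_{C_k}\cdot(\text{label sum})>0$ for every cycle in $C_k$, which was just established. Writing $k_s$ for the SCC index of $s$ and choosing $K$ larger than the range of every $\phi_k$, define
\[
\iota(s,j)\ =\ k_s\,K + \phi_{k_s}(s) + (j-1)\sigma_{C_{k_s}},
\]
breaking ties arbitrarily to obtain an injection $S\times[2]\to\QQ$. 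The agreement condition holds because the $(j-1)\sigma$ adjustment is constant within an SCC and dwarfed by the $K$-term across SCCs. Compatibility for a transition $t\in f(s,(1,2))$ inside one SCC reduces to $\phi_k(t)-\phi_k(s)>-\sigma_{C_k}$, the defining inequality at $\ell=+1$; for $t\in f(s,(2,1))$ it reduces to $\phi_k(t)-\phi_k(s)>\sigma_{C_k}$, the case $\ell=-1$; across SCCs the $k_s\,K$ term dominates, so compatibility is automatic. The main obstacle is accommodating SCCs of opposite sign in a single order: the toy example $S=\{s,s'\}$ with $s\in f(s,(1,2))$ and $s'\in f(s',(2,1))$ shows that no additive embedding $(s,j)\mapsto\alpha_s+\beta_j$ can give a compatible order, which is precisely why the per-SCC sign $\sigma_{C_k}$ and the $K$-separation are both needed.

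Finally, the condition is testable in polynomial time: compute the SCCs of $\cG^*$ in linear time, and in each SCC run Karp's minimum-mean-cycle algorithm on weights $\ell$ and on $-\ell$; the hypothesis holds iff every SCC has either positive minimum cycle mean on $\ell$ or positive minimum cycle mean on $-\ell$. This works even when $\cM$ is nondeterministic, since $\cG^*$ is read off directly from the transition relation $f$.
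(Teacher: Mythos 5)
Your proof is correct, and it takes a genuinely different route from the paper's for the ``if'' direction. The paper passes through interval-graph machinery: it maps $S\times[2]$ into $\QQ$, observes that the resulting system of intervals is proper, invokes the proper-interval-graph $=$ unit-interval-graph theorem to normalize all intervals to length one, reduces to the strongly connected case, and then argues by a Farkas-type contradiction (a positive combination of violated inequalities decomposes into directed cycles) that infeasibility forces a zero-weight cycle, citing \cite{scaling-shortest-path} for the algorithm. You instead decompose $\cG^*$ into SCCs directly, note that absence of zero-sum closed walks forces every SCC to have all cycle sums of one strict sign $\sigma_C$, solve a per-SCC strict difference-constraint system to get potentials $\phi_k$, and then glue them with the affine formula $\iota(s,j)=k_sK+\phi_{k_s}(s)+(j-1)\sigma_{C_{k_s}}$; for the algorithm you run Karp's minimum-mean-cycle per SCC. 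Your ``only if'' direction via Gallai--Roy (long directed paths occur as subgraphs of high-chromatic digraphs, and $\cM$-goodness is inherited by subgraphs) is also cleaner than the paper, which leaves the link to $P_n$ implicit. Your construction is arguably more explicit and avoids the interval-graph detour, at the cost of needing the SCC-splicing argument and the affine gluing; the paper's interval-graph lemma does that geometric work in one stroke. Two small points worth tightening in a final write-up: spell out the splicing more carefully (your ``$\gcd$-tuned multiplicities'' is vaguer than needed --- e.g.\ take $P\cdot c_2^t\cdot P'$ for large $t$ to get a negative closed walk at the same basepoint as $c_1$, then take integer multiples of both to cancel), and say explicitly how ties in $\iota$ are broken consistently across $j=1,2$ so that the agreement condition survives (e.g.\ first scale $\phi_k$ so distinct values differ by more than $2$, then break the remaining ties, which occur only between pairs $(s,j),(t,j)$ with $\phi_k(s)=\phi_k(t)$, by a fixed order on $S$).
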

\begin{proof} Let $\cM = (S,f,\Delta_S)$. Suppose that $\prec$ is any total ordering on $S\times [2]$. Then there is an order preserving map $\iota : (S\times [2], \prec) \hookrightarrow (\QQ, <)$. Thinking of this as a map $S \rightarrow \QQ^2$, we can associate an interval $I_s \subset \QQ$ to each element $s \in S$, with endpoints $\iota(s,1)$ and $\iota(s,2)$.

Suppose now that $\prec\mid_{S\times\{1\}}$ agrees with $\prec\mid_{S\times\{2\}}$. It's easy to check that there can't be any $s,t \in S$ with $I_s \subset I_t$. Therefore, by the fact that proper interval graphs are always unit interval graphs (\cite{indifference-graphs}, \cite{proper-interval-algorithmic}), we may assume without loss of generality that
\[
|\iota(s,2) - \iota(s,1)| = 1
\]
for all $s \in S$. Additionally, if $I_s$ overlaps with $I_t$ for any $s,t \in S$, then we can check that the endpoints of $I_s$ must be sorted in the same way as the endpoints of $I_t$. Thus, within any connected component of our unit interval graph, all the intervals must have their endpoints sorted the same way.

Now we associate a weighted digraph $\cG$ to $\cM$, as follows. For any $s,t \in S$ and any $i,j \in [2]$, if $t \in f(s,(i,j))$ then we draw an edge from $s$ to $t$ with weight $j-i$ in $\cG$ (note that $\cG$ might have multiple edges of different weights connecting a pair of vertices, and that some edges may have weight $0$). Note that if $\prec$ is $\cM$-compatible, then every strongly connected component (ignoring the weights) of $\cG$ must be mapped to a connected component of our unit interval graph. It's easy to see that an $\cM$-compatible order on $S\times [2]$ exists if and only if each strongly connected component $C$ of $\cG$ has an $\cM$-compatible order on $C\times [2]$ (since we can linearly order the strongly connected components of $\cG$), so we may assume without loss of generality that $\cG$ is strongly connected.

If we have $\iota(s,2) > \iota(s,1)$ for all $s \in S$, then $\prec$ will be $\cM$-compatible if and only if the system of inequalities
\[
\{x_s < x_t + w \mid (s,t)\text{ is an edge of }\cG\text{ having weight }w\}
\]
is solved by taking $x_s = \iota(s,1)$. If $\iota(s,2) < \iota(s,1)$ for all $s \in S$, then the inequalities above must be replaced with $x_s < x_t - w$.

We will show that there is an $\cM$-compatible order if and only if $\cG$ has no directed cycles of total weight $0$ (an efficient way to test this is given in \cite{scaling-shortest-path}). First, if there is such a cycle, then adding the inequalities corresponding to its edges we see that the system of inequalities above has no solution (regardless of which way the endpoints of each interval are sorted). Conversely, if there is no solution to the above system of inequalities for either choice of how the endpoints of the intervals are sorted, then there must be a positive linear combination of these inequalities that comes out to $0 < 0$. Since each inequality has exactly one variable on each side, we can decompose this linear combination into positive linear combinations corresponding to directed cycles of $\cG$, to see that $\cG$ must have a directed cycle $c_+$ with nonnegative total weight and a directed cycle $c_-$ of nonpositive total weight. Since we have assumed that $\cG$ is strongly connected, it isn't hard to show that in fact $\cG$ must have a directed cycle of total weight $0$ (by finding a suitable positive linear combination of $c_+$, $c_-$, and any directed cycle that connects $c_+$ to $c_-$).
\end{proof}

\begin{thm} Checking whether a given (non-deterministic) cycling $3$-machine $\cM$ has a compatible order is NP-complete.
\end{thm}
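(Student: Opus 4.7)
The plan is to reduce from 3-SAT. Membership in NP is clear: a compatible total order on $S\times[3]$ is a polynomial-size witness, which can be verified in polynomial time against the transition constraints and the projection-agreement condition. So the main task is NP-hardness.

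I would adapt the reduction from the previous theorem. There, each clause got its own private triple of positions $(3i-2,3i-1,3i)$, but with $k=3$ fixed all clauses must share the same three positions, and non-determinism (allowing $f(s,(i,j))$ to be a set) has to do the work of combining constraints from different clauses. For each variable $v$, introduce states $v$ and $\overline{v}$, identifying ``$v$ is true'' with $v\prec\overline{v}$ in the induced order on $S$. For each clause $C_j$, add a constant-size gadget of auxiliary states and transitions so as to create an $\cM$-bad cycle exactly when all three literals of $C_j$ are false.

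The naive attempt simply reuses the positions $(1,2),(2,3),(3,1)$ for every clause, setting $\overline{\ell_2}\in f(\ell_1,(1,2))$, $\overline{\ell_3}\in f(\ell_2,(2,3))$, $\overline{\ell_1}\in f(\ell_3,(3,1))$ for each $C_j=(\ell_1\vee\ell_2\vee\ell_3)$. This yields the desired single-clause cycle of length six (three transitions and three consistency edges $\overline{\ell}\prec\ell$), but fails across clauses: on the satisfiable pair $(a\vee b\vee c)\wedge(\overline{a}\vee\overline{b}\vee\overline{c})$ the six combined transitions already close the cycle
\[
(a,1)\prec(\overline{b},2)\prec(c,3)\prec(\overline{a},1)\prec(b,2)\prec(\overline{c},3)\prec(a,1)
\]
without using any consistency edges, so the machine has no compatible order even though the formula is satisfiable.

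The main obstacle, and the heart of the proof, is designing a clause gadget so that (i) each clause's ``all literals false'' cycle still closes within that clause (now routed through its private auxiliary states), and (ii) any cycle mixing transitions from different clauses necessarily traverses enough consistency edges $\overline{\ell}\prec\ell$ that, read logically, it corresponds to a genuine resolution-style derivation of unsatisfiability. One natural approach is to introduce a bounded number of auxiliary states private to each clause and route each clause transition through them, forcing any multi-clause cycle to visit several clause gadgets in turn and thereby to pin down the truth values of many literals simultaneously before it can close. Once a correct gadget is in place, the reduction is completed in the standard way: a satisfying assignment yields a compatible order by placing each true literal before its negation in the order on $S$ and interleaving the three copies of each state carefully; conversely, a compatible order yields a truth assignment that must satisfy every clause, for otherwise the corresponding clause gadget would produce an $\cM$-bad cycle.
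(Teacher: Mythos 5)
Your proposal correctly identifies the central obstruction — that with $k=3$ fixed, every clause must share the same three positions, so the naive port of the varying-$k$ reduction lets cross-clause cycles close spuriously — but it stops short of resolving it. ``Introduce a bounded number of auxiliary states private to each clause and route each clause transition through them'' is a plan, not a gadget: you never specify the auxiliary states or their transitions, and you never argue that a multi-clause cycle is forced to traverse enough consistency edges $\overline{\ell}\prec\ell$ to correspond to a genuine refutation. The obstacle is stubborn precisely because the auxiliary states are still subject to the rigid constraint that the induced orderings on $S\times\{1\}$, $S\times\{2\}$, $S\times\{3\}$ all agree, and a cycle witnessing incompatibility need not use any consistency edge at all — it can close entirely through transition-induced inequalities. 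Without an explicit gadget and a soundness argument, the NP-hardness direction is missing.

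The paper's proof takes a genuinely different route and does not reduce from 3-SAT. It exploits the interval structure of compatible orders on $S\times[3]$: restricting attention to machines with $s\in f(s,(1,2))$, one may normalize (via the proper-interval $=$ unit-interval fact already used at $k=2$) so that $\iota(s,2)=\iota(s,1)+1$, set $x_s=\iota(s,1)$, and write $\iota(s,3)=u(x_s)$ for an unknown increasing function $u$. Compatibility then becomes feasibility of a system of strict inequalities among the $x_s$, their $+1$ shifts, and $u(x_s)$. The key gadget plays the $+1$ shift against the unknown monotone $u$ to force $x-y\notin I$ for a prescribed integer interval $I$, which is exactly what is needed to encode 3-coloring: each vertex variable is confined near a multiple of $10$, and an edge constraint $x_v-x_w\notin[-2,2]$ forbids equal colors. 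That mechanism — an unknown monotone function carving forbidden gaps — is the missing ingredient your sketch is gesturing toward but never supplies, and it is where all the real work of the theorem lives.
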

\begin{proof}[Proof sketch] We restrict to the case of cycling $3$-machines such that for each state $s \in S$, we have $s \in f(s,(1,2))$, so that our compatible order $\prec$ must satisfy $(s,1) \prec (s,2)$. Using the fact that proper interval graphs are always unit interval graphs as in the proof of the previous theorem, to any compatible $\prec$ we can associate an order preserving map $\iota : (S\times [3], \prec) \rightarrow (\QQ, <)$ such that $\iota(s,2) = \iota(s,1) + 1$ for all $s \in S$. Introduce variables $x_s$ with $x_s = \iota(s,1)$. Since
\[
x_s < x_t \iff \iota(s,3) < \iota(t,3)
\]
for compatible orders $\prec$, we can find an increasing function $u : (\QQ, <) \rightarrow (\QQ, <)$ such that
\[
\iota(s,3) = u(x_s).
\]
Thus, the existence of a compatible order is equivalent to the existence of rational numbers $x_s$ for $s \in S$ and an increasing function $u$ satisfying a system of inequalities where each side of each inequality is in one of the forms $x_s, x_s + 1,$ or $u(x_s)$. Our goal is to show that solving such a system (for the $x_s$s and the unknown function $u$) is NP-complete.

Using polynomially many auxiliary variables, we can also use inequalities of the form $x_s < x_t \pm n$, where $n$ is a natural number which is at most polynomially large. Our main gadget will be based on the following observation. Suppose that $a_1, ..., a_n, b$ satisfy the system
\begin{align*}
\forall i \le n-1, \;\;\; a_{i+1} &< a_i + 1,\\
a_1 &< a_n - (n-2),\\
\forall i, \;\;\; u(a_i) &< a_i,\\
b + 1 &< u(b).
\end{align*}
Then we must have $b \not\in [a_1-1,a_n]$: if $b \in [a_i - 1, a_i]$, then
\[
a_i \le b + 1 < u(b) \le u(a_i) < a_i,
\]
a contradiction. If we let $m, k$ be natural numbers and let $x,y$ be two more variables, and add the inequalities
\begin{align*}
y + m &< a_1,\\
a_n &< y+m+n,\\
x + k &< b + 1,\\
b &< x + k,
\end{align*}
to the above system, then we see that
\[
x-y \not\in [(m-k)+2, (m-k)+n-2].
\]
The strategy is to fix $m-k$ and take $m$ large enough that the interval $[a_1-1,a_n]$ will not be anywhere near any other variables, other than $b$, giving us a gadget that guarantees that the difference $x-y$ is not in a given interval with integer endpoints.

Now it is straightforward to find a reduction from 3-coloring. Given a graph $\cG = (V,E)$, we introduce variables $x_v$ corresponding to the vertices of $V$, and use the gadget described above to force
\[
x_v - x_w \in [-21,-19] \cup [-11,-9] \cup [-1,1] \cup [9,11] \cup [19,21]
\]
for all $v,w \in V$. For each edge $\{v,w\} \in E$, we use the above gadget to add the additional constraint $x_v - x_w \not\in [-2,2]$. Given a solution to the above system, if we color the vertex $v$ of $\cG$ based on the closest multiple of $10$ to $x_v - x_{v_0}$ for some fixed vertex $v_0$, we get a 3-coloring of $\cG$, and conversely from a 3-coloring of $\cG$ we can easily construct a solution to the above system.
\end{proof}

\section{The general case}

In the general case, it is technically convenient to require trivial cycles not to be $\cM$-bad (in particular, if a nontrivial $\cM$-good hypergraph exists, we must have $\cB \cap \Delta_S = \emptyset$).

\begin{defn} We define an \emph{order system} on a set $S$ to be a triple $(\sim, \preceq, \le)$ such that $\sim$ is an equivalence relation on $S$, $\preceq$ is a partial order on $S/\!\sim$, and $\le$ is an extension of $\preceq$ to a total order on $S/\!\sim$.
\end{defn}

\begin{defn} If $\cM = (S,f,\cB)$ is a $k$-machine, then we say that the order system $(\sim, \preceq, \le)$ on $S \times [k]$ is \emph{compatible} with $\cM$ if it satisfies the following three conditions:
\begin{itemize}
\item For any $(s,i), (t,j) \in S\times [k]$ with $t \in f(s,(i,j))$, we have $((s,i)/\!\sim) \preceq ((t,j)/\!\sim)$.

\item The induced order systems $(\sim, \preceq, \le)\!\mid_{S\times \{i\}}$ on $S$ are independent of $i$.

\item For any $s,t \in S$ with $(s/\!\sim) \preceq (t/\!\sim)$ in the induced order system on $S$, we have $(s,t) \not\in \cB$.
\end{itemize}
\end{defn}

\begin{thm}\label{general} If $\cM = (S,f,\cB)$ is a $k$-machine, then there exist $\cM$-good $k$-uniform directed hypergraphs $\cH$ of arbitrarily large chromatic number if and only if there is an order system $(\sim, \preceq, \le)$ on $S \times [k]$ which is compatible with $\cM$. Furthermore, if the chromatic number is bounded then it is bounded by the number of possible order systems on $S$.
\end{thm}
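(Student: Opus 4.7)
My plan follows the same overall strategy as Theorem \ref{cycling}, with the linear order on $S \times [k]$ replaced by an order system throughout; the real work is in setting up the correct bookkeeping in both directions.

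For necessity, I let $\cH = (V, E)$ be $\cM$-good and form the auxiliary digraph $\cG$ on $V \times S$ exactly as in the proof of Theorem \ref{cycling}. The key observation is that any walk in $\cG$ from $(a, s)$ to $(a, t)$ corresponds to a cycle of $\cH$ based at $a$ with state sequence beginning at $s$ and ending at $t$, so $\cM$-goodness forces $(s, t) \notin \cB$ whenever such a walk exists. I endow $V \times S$ with the order system in which $\sim$ is ``same strongly connected component of $\cG$'', $\preceq$ is the reachability order on the SCC-quotient, and $\le$ is any linear extension of $\preceq$; then I color each $v \in V$ by the induced order system on $\{v\} \times S$. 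If $\chi(\cH)$ exceeds the number of order systems on $S$, some edge $e \in E$ is monochromatic, and the order system on $S \times [k]$ pulled back along $e$ is $\cM$-compatible: condition (1) because edges of $\cG$ are $\preceq$-increasing, (2) because $e$ is monochromatic, and (3) because otherwise the $\cG$-walk witnessing the $\preceq$-relation would produce an $\cM$-bad cycle of $\cH$.

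For sufficiency, I view a compatible order system $(\sim_0, \preceq_0, \le_0)$ on $S \times [k]$ as a finite relational structure $T$ in the signature $(\sim, \preceq, \le)$, whose induced substructure on each slice $S \times \{i\}$ is a single order system $T_0$ on $S$ (by compatibility condition (2)). For a finite order system $R$ to be chosen, let $\cH(R)$ have as vertices the order-system-preserving embeddings $\phi: T_0 \hookrightarrow R$ and as edges the $k$-tuples $(\psi|_{S \times \{1\}}, \ldots, \psi|_{S \times \{k\}})$ arising from order-system-preserving embeddings $\psi: T \hookrightarrow R$. An $\cM$-bad cycle in $\cH(R)$ with states $s_0, \ldots, s_n$ and $(s_0, s_n) \in \cB$ would, by repeatedly applying compatibility (1) along the cycle, yield a chain $\phi_{v_0}(s_0) \preceq \phi_{v_1}(s_1) \preceq \cdots \preceq \phi_{v_0}(s_n)$ in $R$, hence $s_0 \preceq_1 s_n$ in $T_0$, contradicting compatibility (3); so $\cH(R)$ is $\cM$-good.

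To make $\chi(\cH(R))$ arbitrarily large I invoke structural Ramsey theory: for any $r$ I want $R$ so that every $r$-coloring of the copies of $T_0$ in $R$ has a monochromatic copy of $T$, which is precisely a monochromatic edge of $\cH(R)$. The main obstacle, and the place where this proof departs most visibly from Theorem \ref{cycling}, is justifying this Ramsey input: order systems carry an equivalence relation in addition to a partial order with linear extension, so one needs Ramsey in a slightly richer signature than the one addressed by the classical Ne\v{s}et\v{r}il--R\"odl theorem for posets with a linear extension cited in the abstract. I expect this to be handled either by invoking the Hubi\v{c}ka--Ne\v{s}et\v{r}il Ramsey theorem for ordered finite relational structures, or by refining each $\sim$-class to an arbitrary internal linear order and reducing to the classical case.
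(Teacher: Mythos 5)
Your necessity argument matches the paper's exactly: build the auxiliary digraph $\cG$ on $V\times S$, take $\sim$ to be strongly-connected components, $\preceq$ the reachability order on $\cG/\!\sim$, $\le$ an arbitrary linear extension, color each $v$ by the induced order system on $\{v\}\times S$, and pull the order system back along a monochromatic edge. No issues there, and the same bookkeeping gives the quantitative bound.

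For sufficiency, you have correctly identified the real obstruction: the structure you are trying to apply Ramsey to, namely $(S\times[k],\sim,\preceq,\le)$ with $\le$ a linear order only on $\sim$-classes, is not rigid when a $\sim$-class has more than one element, so the classical Ne\v{s}et\v{r}il--R\"odl theorem for posets with a linear extension does not apply as stated. But the paper's fix is more economical than either of the two you gesture at: it simply passes to the quotient by $\sim$ \emph{before} invoking Ramsey. Concretely, it sets $A = (S/\!\sim,\ \preceq,\ \le)$ and $B = ((S\times[k])/\!\sim,\ \preceq,\ \le)$; both are genuine partial orders equipped with a total linear extension, hence rigid, and the classical theorem produces a Ramsey object $C$ for the pair $(A,B)$. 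Then $V$ is the set of induced copies of $A$ in $C$, and $E$ consists of the $k$-tuples $(A_1',\dots,A_k')$ obtained from an induced copy $B'$ of $B$ by pushing forward the $k$ canonical copies $A_i$ of $A$ inside $B$ (the images of $S\times\{i\}$ under the quotient map). Compatibility condition (2) is exactly what guarantees these $A_i$ are all isomorphic to $A$, so nothing is lost in quotienting, and $\cM$-goodness of the resulting $\cH$ is verified by the same chain-of-$\preceq$-relations argument you sketch. Your proposed alternatives (Hubi\v{c}ka--Ne\v{s}et\v{r}il for general ordered structures, or artificially refining each $\sim$-class by an internal linear order) would probably also go through, but the first imports heavier machinery than needed and the second forces you to track auxiliary structure that has no intrinsic meaning; quotienting by $\sim$ dissolves the problem outright.
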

\begin{proof} First we show the necessity. Let $\cH = (V,E)$ be an arbitrary $k$-uniform directed hypergraph. As in the cycling case, we define an auxiliary digraph $\cG$ with vertex set $V\times S$ and edge set given by
\[
\{((a,s),(b,t)) \mid \exists e \in \cH,\ i, j \in [k]\text{ s.t. }e_i = a, e_j = b, t \in f(s,(i,j))\}.
\]
We define an equivalence relation $\sim$ on the vertex set of $\cG$ by partitioning $\cG$ into its strongly connected components. Define a partial order $\preceq$ on $\cG/\!\sim$ by $(u/\!\sim) \preceq (v/\!\sim)$ if there exists a directed path from $u$ to $v$ in $\cG$. Finally, extend the partial order $\preceq$ to a total order $\le$ on $\cG/\!\sim$. Note that $\cH$ is $\cM$-good if and only if, for any $v \in V$ and any $(s,t)$ with $((v,s)/\!\sim) \preceq ((v,t)/\!\sim)$, we have $(s,t) \not\in \cB$.

Color the vertex $v \in V$ by the induced order system $(\sim, \preceq, \le)\!\mid_{\{v\}\times S}$. If the chromatic number of $\cH$ is greater than the number of possible order systems on $S$, then there must exist an edge $e \in E$ such that $e_1, ..., e_k$ all have the same induced order systems. We now define the order system $(\sim,\preceq,\le)$ on $S\times [k]$ by $(s,i) \sim (t,j)$ if and only if $(e_i,s) \sim (e_j,t)$, and similary for $\preceq, \le$, and note that this order system is compatible with $\cM$.

Now we show the sufficiency. Fix an order system $(\sim,\preceq,\le)$ on $S\times [k]$ which is compatible with $\cM$. Let $A$ be the structure $(S/\!\sim, \preceq\mid_{S/\!\sim}, \le\mid_{S/\!\sim})$, and let $B$ be the structure $((S\times[k])/\!\sim, \preceq, \le)$, so $A,B$ are both partial orders with linear extensions. Let $A_i$ by the induced copy of $A$ in $B$ coming from $S\times\{i\}$. By structural Ramsey theory for posets with a linear extension (Theorem 4.9 of \cite{all-those-ramsey}), there exists a partial order with linear extension $C$ such that for every way of coloring the set of induced copies of $A$ in $C$ by finitely many colors, there exists an induced copy $B'$ of $B$ in $C$ such that all induced copies of $A$ in $B'$ are colored with the same color.

We define $\cH = (V,E)$ by taking $V$ to be the set of induced copies of $A$ in $C$, and defining $E$ to be the set of $k$-tuples $(A_1', ..., A_k')$ such that there is an induced copy $B'$ of $B$ such that the map $B \xrightarrow{\sim} B'$ takes $A_i$ to $A_i'$.

It's easy to show that this $\cH$ is $\cM$-good (the auxiliary digraph $\cG$ has an equivalence relation $\sim$ such that the vertices of $\cG/\!\sim$ correspond to the elements of the induced copies of $A$ in $C$, and all of the edges of $\cG/\!\sim$  are non-decreasing with respect the partial order $\preceq$), and the chromatic number of $\cH$ is infinite by the choice of $C$.
\end{proof}

\begin{ex} Consider the $2$-machine $\cM = (\{0,1\},f,\{(0,1)\})$, with $f(0,(1,2)) = f(0,(2,1)) = \{1\}, f(1,(1,2)) = \{0\}$, and $f(1,(2,1)) = \emptyset$. A digraph is $\cM$-good if and only if it has no odd cycles such that every even-numbered edge points in the same direction (in particular, every odd cycle of an $\cM$-good digraph must have length at least $7$). There is a unique order system $(\sim, \preceq, \le)$ on $\{0,1\}\times [2]$ which is compatible with $\cM$: $(0,1) < (1,1) \sim (0,2) < (1,2)$, $0$ is incomparable with $1$ in the induced $\preceq$ on $\{0,1\}$, and $(0,1) \prec (1,2)$.

We can unwind the proof of Theorem \ref{general} to construct an explicit $\cM$-good digraph with infinite chromatic number as follows. For our vertex set, we take the set of ordered pairs $(A,B)$ of finite subsets of $\NN$ such that $A \not\subseteq B$ and $B \not\subseteq A$. For edges we take pairs of vertices of the form $((A,B),(B,C))$ such that $A \subset C$. It's easy to check that this digraph is $\cM$-good. To see that it has infinite chromatic number, we apply structural Ramsey theory for posets with a linear extension and note that every finite poset has an induced copy inside the poset of finite subsets of $\NN$.
\end{ex}

\section{Application to constructing terms in bounded width algebras}

We follow the same general proof strategy as in Theorem 3.2 of \cite{optimal-maltsev}. Rather than $(2,3)$-consistency, we'll use the framework of $pq$-instances from \cite{slac} - this will allow us to both prove stronger results and simplify the argument.

\begin{defn} We let $\cR_n$ be the set of subdirect relations on the $n$-element set $\{x_1, ..., x_n\}$. For $R, S \in \cR_n$, we define $R\circ S$ to be $\{(a,c) \mid \exists b\ (a,b) \in R,\ (b,c) \in S\}$, and we define $R^-$ to be $\{(b,a) \mid (a,b) \in R\}$.
\end{defn}

\begin{defn} We say a set $\cS \subseteq \cR_n$ of subdirect relations is $pq$-\emph{compatible} if $\cS$ is closed under composition and reversal, and for any $P,Q\in \cS$ there exists $j \ge 0$ such that
\[
\Delta_{\{x_1, ..., x_n\}} \subseteq P\circ (Q\circ P)^{\circ j}.
\]
\end{defn}

\begin{defn} For any $pq$-compatible set of subdirect relations $\cS$, and any function $\pi : ([k]^2\setminus \Delta_{[k]}) \rightarrow \cR$, we define the deterministic $k$-machine $\cM_{\cS,\pi}$ to be $\cM_{\cS,\pi} = (\cR, f, \{\Delta_{\{x_1,...,x_n\}}\}\times(\cR\setminus\cS))$, where $f$ is defined by $f(R,(i,j)) = R\circ \pi(i,j)$.
\end{defn}

\begin{thm} Let $R \subseteq \{x_1, ..., x_n\}^k$ be subdirect, and define $\pi$ by $\pi(i,j) = \pi_{i,j}(R)$. For any $pq$-compatible set $\cS$, if there are $\cM_{\cS,\pi}$-good $k$-uniform directed hypergraphs of arbitrarily large chromatic number, then for any finite bounded width algebra $\bA$ there exists a diagonal element in $\Sg_{\bA}(R)$.
\end{thm}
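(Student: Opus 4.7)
The plan is to prove the contrapositive. Fix a finite bounded width algebra $\bA$ with universe $\{x_1,\dots,x_n\}$, and suppose $\Sg_{\bA}(R)$ contains no diagonal element. I will show that every $\cM_{\cS,\pi}$-good $k$-uniform directed hypergraph has chromatic number at most $n$. Given any $\cM_{\cS,\pi}$-good hypergraph $\cH = (V,E)$, I would build a CSP instance $\mathcal{I}$ over $\bA$ whose variables are the vertices $V$, whose domain is $\{x_1,\dots,x_n\}$, and with one constraint $(v_1,\dots,v_k) \in R$ per hyperedge $e = (v_1,\dots,v_k) \in E$. Any solution $s : V \to \{x_1,\dots,x_n\}$ that is monochromatic on some hyperedge supplies a diagonal element $(a,\dots,a) \in R \subseteq \Sg_{\bA}(R)$, so by pigeonhole it suffices to exhibit a solution to $\mathcal{I}$ whenever $\chi(\cH) > n$.

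To produce such a solution I would invoke the $pq$-instance framework of \cite{slac}: for $\bA$ of bounded width, a CSP instance is satisfiable if and only if its $pq$-consistency closure does not collapse. Running this closure on $\mathcal{I}$ propagates binary projections of $R$ along walks in $\cH$, and these binary projections are exactly the relations $\pi(i,j) = \pi_{i,j}(R)$ that define the transitions $f(R',(i,j)) = R'\circ \pi(i,j)$ of $\cM_{\cS,\pi}$. Hence the composed relation obtained along a walk in $\cH$ is precisely the state reached by running $\cM_{\cS,\pi}$ on the trace of that walk, starting from $\Delta_{\{x_1,\dots,x_n\}}$. Because $\cH$ is $\cM_{\cS,\pi}$-good, every closed walk lands in a state inside $\cS$ rather than in $\cR \setminus \cS$; the $pq$-compatibility of $\cS$ (which provides, for any $P,Q \in \cS$, some $j$ with $\Delta \subseteq P \circ (Q\circ P)^{\circ j}$) then guarantees that the derived binary constraints between any two variables never collapse. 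So $\mathcal{I}$ passes $pq$-consistency, and bounded width produces a solution, completing the contrapositive.

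The main obstacle I expect is the dictionary between ``$\cM_{\cS,\pi}$-good'' and the concrete invariant maintained by the $pq$-consistency algorithm of \cite{slac}: one must carefully account for nondeterministic walks in $\cH$, track how the composed relations interact across different base vertices, and use $pq$-compatibility to recover the diagonal after finitely many alternations. This step mirrors Theorem 3.2 of \cite{optimal-maltsev}, and the substitution of the $pq$-instance framework for $(2,3)$-consistency is precisely where the most bookkeeping is required.
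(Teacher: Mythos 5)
Your high-level plan --- take the contrapositive, turn an $\cM_{\cS,\pi}$-good hypergraph into a CSP instance with one copy of $R$ per hyperedge, observe that walk-pattern relations are exactly the states reached by the machine, and invoke the $pq$-instance machinery of \cite{slac} --- does point in the same direction as the paper, whose proof is a one-line citation of Theorem A.2 of \cite{slac} together with the two definitions. But the instance you build lives over the wrong domain, and this is a genuine flaw rather than a bookkeeping nuisance.

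You fix $\bA$ to have universe $\{x_1,\dots,x_n\}$, set the CSP domain to be $\{x_1,\dots,x_n\}$, and use $R$ itself as the constraint on each hyperedge. Two problems. First, the theorem quantifies over \emph{all} finite bounded width algebras $\bA$, not just those whose universe happens to be the $n$-element set carrying $R$; this is not a WLOG reduction. Second, and decisively: $R$ is an arbitrary subdirect subset of $\{x_1,\dots,x_n\}^k$. It is not assumed to be a compatible relation of any algebra on $\{x_1,\dots,x_n\}$, so your instance does not live over any bounded width template and the $pq$-consistency theory cannot be applied to it as stated. If the argument went through, it would show that no diagonal in $R$ forces $\chi(\cH)\le n$ for every $\cM$-good $\cH$, i.e.\ that the theorem's hypothesis forces a diagonal in $R$ itself --- but the corollary's $R = \{(x,y),(x,z),(y,z),(z,x)\}$ has no diagonal and bounded width algebras certainly exist, so this strengthening is false, and the error must be in the reduction.

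The setup that actually works (and that underlies Theorem 3.2 of \cite{optimal-maltsev}, which the paper says it is following) takes the domain to be the finite free algebra $F_{\cV(\bA)}(x_1,\dots,x_n)$ and the constraint on each hyperedge to be $\Sg_{\bA}(R)\subseteq F_{\cV(\bA)}(x_1,\dots,x_n)^k$, which \emph{is} a compatible relation of a bounded width template. A monochromatic hyperedge in a solution then produces a diagonal of $\Sg_{\bA}(R)$, exactly as the theorem requires, and the relevant bound on the chromatic number is $|F_{\cV(\bA)}(x_1,\dots,x_n)|$, not $n$. You also need to check that the projections $\pi_{i,j}(\Sg_{\bA}(R))$ relate correctly to the machine's states (which compose the $\pi_{i,j}(R)$) before the ``$\cM$-good $\Rightarrow$ $pq$-instance'' dictionary can be read off. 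You flag this dictionary as the hardest part yourself; it is precisely the content that Theorem A.2 of \cite{slac} is being trusted to supply, and it should be spelled out rather than gestured at.
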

\begin{proof} This follows from the definition of $\cM_{\cS,\pi}$, the definition of a $pq$-compatible set of relations, and Theorem A.2 of \cite{slac}.
\end{proof}

\begin{cor} Every finite bounded width algebra has a $4$-ary term $t$ which satisfies $t(x,x,y,z) \approx t(y,z,z,x)$.
\end{cor}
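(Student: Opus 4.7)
The plan is to deduce the corollary from the preceding theorem, applied to the subdirect relation
\[
R = \{(x,y),\ (x,z),\ (y,z),\ (z,x)\} \subseteq \{x,y,z\}^2,
\]
whose four tuples, listed in that order, have first-coordinate row $(x,x,y,z)$ and second-coordinate row $(y,z,z,x)$. A diagonal element in $\Sg_{\bA}(R)$ is therefore witnessed by a $4$-ary term $t$ satisfying $t(x,x,y,z) = t(y,z,z,x)$, which is precisely the identity we want. Here $\pi(1,2) = R$ and $\pi(2,1) = R^-$, so the machine $\cM_{\cS,\pi}$ has exactly two transitions, corresponding to right-composition by $R$ and by $R^-$.

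The first step is to select the $pq$-compatible set. The key observation is that $R \circ R^-$ and $R^- \circ R$ are both reflexive (each contains $\Delta_{\{x,y,z\}}$) and idempotent under their own repeated composition. I would take $\cS$ to be the $\circ,{}^-$-closure of $\{\Delta,\ R \circ R^-,\ R^- \circ R\}$; every element of this closure then contains $\Delta$, so the $pq$-condition $\Delta \subseteq P \circ (Q \circ P)^{\circ j}$ holds trivially at $j = 0$ for any $P, Q \in \cS$. Crucially, the non-reflexive relations $R$ and $R^-$ themselves are \emph{not} members of $\cS$, which is in fact forced: one can check that for the pair $(R, R^-)$ the quantity $R \circ (R^- \circ R)^{\circ j}$ stabilizes at a relation missing two diagonal entries, so no $pq$-compatible $\cS$ can contain both $R$ and $R^-$.

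Next I would invoke Theorem~\ref{general} by constructing an $\cM_{\cS,\pi}$-compatible order system on $\cR_3 \times [2]$. The plan is to exploit the symmetry $R \leftrightarrow R^-$: define $\sim$ on $\cR_3 \times [2]$ symmetrically across the two coordinates so that the induced order systems on $\cR_3 \times \{1\}$ and $\cR_3 \times \{2\}$ coincide automatically, and collapse the relations that stabilize along alternating chains of right-multiplications (of the form $s, s R, s R R^-, s R R^- R, \ldots$) into single equivalence classes. The partial order $\preceq$ is then defined by how far a class sits from $\Delta$'s class along such chains, extended to a total order in any convenient way.

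The main obstacle is the third compatibility condition: every subdirect relation lying weakly above $\Delta$ in the induced partial order on $\cR_3$ must lie in $\cS$. Since $R$ and $R^-$ are not reflexive and so are not in $\cS$, the equivalence $\sim$ must be designed so that $R$ and $R^-$ (and every other non-reflexive composition) end up \emph{incomparable} with $\Delta$ in the induced order---only relations like $R \circ R^-$, $R^- \circ R$, and their further compositions may enter the up-set of $\Delta$, via chains whose alternation of coordinates keeps non-reflexive intermediate states from surfacing at the same coordinate as $\Delta$. Verifying that a consistent such $\sim$ exists and checking the remaining two compatibility conditions is the technical heart of the argument. Once the order system is in place, Theorem~\ref{general} supplies $\cM_{\cS,\pi}$-good digraphs of arbitrarily large chromatic number, the preceding theorem produces the diagonal element in $\Sg_{\bA}(R)$, and the witnessing term is the required $4$-ary $t$.
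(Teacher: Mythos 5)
Your overall framing is right: pick the relation $R$ so that a diagonal element of $\Sg_{\bA}(R)$ encodes the identity $t(x,x,y,z)\approx t(y,z,z,x)$, choose a $pq$-compatible set $\cS$, and then show $\cM_{\cS,\pi}$-good digraphs of arbitrarily large chromatic number exist. But your choice of $\cS$ breaks the argument.

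You take $\cS$ to be the $\circ,{}^-$-closure of $\{\Delta,\ R\circ R^-,\ R^-\circ R\}$, and every member of this set is reflexive. Working it out, this closure has only six elements: $\Delta$, $R\circ R^-$, $R^-\circ R$, $(R\circ R^-)\circ(R^-\circ R)$, $(R^-\circ R)\circ(R\circ R^-)$, and the full relation. None of these equals $R\circ R\circ R^-\circ R^-$, which one computes to be $\{(x,x),(x,y),(x,z),(y,x),(y,y),(z,x),(z,z)\}$ (seven pairs). Now take any digraph containing a directed path $u\to v\to w$, and form the cycle $(u,(u,v),v,(v,w),w,(v,w),v,(u,v),u)$ that goes forward along both edges and then retraces them backward. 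Its trace is $(1,2),(1,2),(2,1),(2,1)$, so starting from $s_0=\Delta$ the final state is $s_4=R\circ R\circ R^-\circ R^-\notin\cS$, giving $(s_0,s_4)\in\{\Delta\}\times(\cR\setminus\cS)=\cB$. That is, every digraph with a directed path of length two has an $\cM_{\cS,\pi}$-bad cycle. With your $\cS$ there are therefore no $\cM_{\cS,\pi}$-good digraphs of large chromatic number, and Theorem~\ref{general} cannot produce an order system (indeed none exists). The fact that $R,R^-\notin\cS$ is correct but beside the point; the real constraint you must handle is that arbitrary "there-and-back" subwords of cycles produce relations like $R\circ R\circ R^-\circ R^-$, $R^-\circ R^-\circ R\circ R$, etc., and your $\cS$ omits all of them.

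The paper sidesteps this by choosing $\cS$ to consist of the relations represented by words that contain $R\circ R$ or $R^-\circ R^-$ as a factor, together with $(R\circ R^-)^{\circ j}$ and $(R^-\circ R)^{\circ j}$; this is exactly the set of words that are \emph{not} odd-length strictly alternating in $R,R^-$. Every element of that $\cS$ contains a power of the $3$-cycle $(x\;y\;z)$, and $pq$-compatibility is verified by observing that $R\circ R\circ R$ and $R^-\circ R\circ R\circ R^-$ each contain two distinct such powers. With this $\cS$, a cycle is bad precisely when its trace word is odd and alternating (apart from one vertex where it fails to alternate), and the paper then exhibits an explicit digraph with no such cycles and infinite chromatic number, namely $V=\{(a,b)\in\NN^2:a<b\}$, $E=\{((a,b),(b,c)):a<b<c\}$. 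This direct construction is considerably shorter than attempting to build a compatible order system on $\cR_3\times[2]$, a step your proposal also leaves unverified. To repair your approach you would at minimum need to enlarge $\cS$ to absorb all retrace words (e.g., include every relation represented by a word containing $R\circ R$ or $R^-\circ R^-$), at which point you essentially recover the paper's $\cS$ and the remaining work is the explicit digraph construction.
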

\begin{proof} Let
\[
R = \left\{\begin{bmatrix} x \\ y \end{bmatrix}, \begin{bmatrix} x \\ z \end{bmatrix}, \begin{bmatrix} y \\ z \end{bmatrix}, \begin{bmatrix} z \\ x \end{bmatrix}\right\},
\]
define $\pi$ by $\pi(i,j) = \pi_{i,j}(R)$, and let $\cS$ be the set of relations in the compositional semigroup generated by $R,R^-$ which correspond to words which either contain $R\circ R$ or $R^- \circ R^-$, or are equal to $(R\circ R^-)^{\circ j}$ or $(R^- \circ R)^{\circ j}$ for some $j \ge 0$. Since every element of $\cS$ contains some power of the cyclic permutation $(x\; y\; z)$, and since both
\[
R\circ R\circ R = \left\{\begin{bmatrix} x \\ x \end{bmatrix}, \begin{bmatrix} x \\ y \end{bmatrix}, \begin{bmatrix} x \\ z \end{bmatrix}, \begin{bmatrix} y \\ y \end{bmatrix}, \begin{bmatrix} y \\ z \end{bmatrix}, \begin{bmatrix} z \\ x \end{bmatrix}, \begin{bmatrix} z \\ z \end{bmatrix}\right\}
\]
and
\[
R^- \circ R\circ R\circ R^- = \left\{\begin{bmatrix} x \\ x \end{bmatrix}, \begin{bmatrix} x \\ y \end{bmatrix}, \begin{bmatrix} y \\ x \end{bmatrix}, \begin{bmatrix} y \\ y \end{bmatrix}, \begin{bmatrix} y \\ z \end{bmatrix}, \begin{bmatrix} z \\ x \end{bmatrix}, \begin{bmatrix} z \\ y \end{bmatrix}, \begin{bmatrix} z \\ z \end{bmatrix}\right\}
\]
contain two distinct powers of the cyclic permutation $(x\; y\; z)$, $\cS$ is $pq$-compatible.

The $\cM_{\cS,\pi}$-bad cycles in a digraph are now exactly the odd cycles which alternate between forward steps and backward steps (aside from a single vertex where they do not alternate), so we just have to construct a digraph $\cG = (V,E)$ which has no odd alternating cycles and has infinite chromatic number. To finish the proof, we take $V = \{(a,b) \in \NN^2 \mid a < b\}$ and $E = \{((a,b),(b,c)) \mid a < b < c\}$.
\end{proof}

We can generalize the previous result, to give a new proof of the ``Loop Lemma'' (Theorem 3.5 of \cite{cyclic}, originally proved in \cite{smooth-digraph-dichotomy}) in the case of bounded width algebras.

\begin{prop} If $R \subseteq \{x_1, ..., x_n\}^2$, when viewed as a digraph, is smooth, weakly connected, and has algebraic length $1$, then there exists a number $k$ such that for all $l, m \ge k$,
\[
(R^{\circ l}\circ R^{-\circ m})^{\circ k} = \{x_1, ..., x_n\}^2.
\]
\end{prop}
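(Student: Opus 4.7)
The plan is to view $T := R^{\circ l} \circ R^{-\circ m}$ as a directed graph on $V = \{x_1, \ldots, x_n\}$: a pair $(a,c) \in T$ iff the forward $l$-image $D_l(a) := \{v : (a,v) \in R^{\circ l}\}$ meets the forward $m$-image $D_m(c)$. The conclusion $T^{\circ k} = V^2$ then asserts that every ordered pair of vertices is linked by a walk of length exactly $k$ in $T$. I would establish two structural properties of $T$, both valid for $l, m \ge K$ for some threshold $K$ depending only on $R$: strong connectivity, and primitivity (gcd of the lengths of closed walks equal to $1$). Together these imply by a Wielandt-type bound that $T^{\circ k} = V^2$ for all $k \ge (n-1)^2 + 1$, so the proposition follows by taking $k$ larger than both $K$ and this Wielandt bound.

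Strong connectivity reduces to a local \emph{edge lemma}: for every edge $(u,v) \in R$ and all $l, m \ge K$, the pair $(u,v)$ lies in $T^{\circ 2}$, via some intermediate vertex $b$ with $(u,b), (b,v) \in T$. The direct inclusion $(u,v) \in T$ can fail because $D_l(u) \cap D_m(v)$ may be empty for particular residues of $l,m$ modulo the periods of the SCCs of $R$; but the algebraic length $1$ hypothesis gives, via a Bezout-style combination of cycles drawn from different SCCs, a pivot $b$ whose forward orbit absorbs the residue mismatch. Weak connectivity of $R$ then converts the edge lemma into a $T$-walk of length at most $2\cdot\mathrm{diam}(R) \le 2n$ between any two vertices $a, c \in V$.

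For primitivity, I would exhibit closed $T$-walks of coprime lengths at a fixed base vertex $v_0$. A closed $T$-walk of length $k$ at $v_0$ amounts to a closed $R$-walk of algebraic length $k(l-m)$ with an $(F^l B^m)^k$-pattern. Algebraic length $1$ of $R$ again provides the needed flexibility: by combining closed walks that traverse different SCCs of $R$, one can realize, for the same $l, m$, closed $T$-walks of several coprime lengths, so $T$ is primitive.

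The main obstacle is the \emph{edge lemma}: that a suitable intermediate vertex $b$ exists uniformly over all $l, m \ge K$, not just for specific residues. Algebraic length $1$ of $R$ controls which combinations of cycle lengths are available, but translating this flexibility into a uniform choice of $b$ for every $R$-edge, across all large $l, m$, requires careful modular bookkeeping over the possibly-incommensurable periods of the strongly connected components of $R$. Once this is established, the rest of the argument is essentially structural graph theory and the standard Wielandt estimate.
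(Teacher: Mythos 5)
The paper actually states this Proposition \emph{without proof} --- it is given as a known fact and immediately followed by the next definition --- so there is no author's argument to compare against; I can only assess your proposal on its own terms.

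Your overall route (show $T_{l,m} := R^{\circ l}\circ R^{-\circ m}$ is strongly connected and primitive uniformly for $l,m$ large, then invoke the Wielandt bound $(n-1)^2+1$) is a plausible and well-structured plan, and you correctly identify where the difficulty lives. But as written it has a genuine gap, and you say so yourself: the ``edge lemma'' and the primitivity claim are both asserted rather than proved, and they are precisely where the hypotheses (smoothness, weak connectivity, algebraic length $1$) must actually be cashed in. Concretely: (i) for strong connectivity of $T$ you need the edge lemma in \emph{both} directions, i.e.\ both $(u,v)$ and $(v,u)$ in some bounded power of $T$ for every $R$-edge $\{u,v\}$; as phrased you only ask for one orientation. (ii) The primitivity step is where algebraic length $1$ is essential, but the argument ``by combining closed walks that traverse different SCCs one can realize closed $T$-walks of several coprime lengths'' is exactly the nontrivial modular bookkeeping you flag as the obstacle --- the period of each SCC, the entry/exit times along the bridges, and the residues realized by $D_l(\cdot)$ and $D_m(\cdot)$ all interact, and the algebraic-length-$1$ closed walk must be threaded through this to produce a $T$-loop or a pair of coprime $T$-cycles. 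Without that, you have not shown that a single threshold $K$ works.

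One simplification you should exploit: since $R$ is smooth, the compositional semigroup generated by $R$ and $R^{-}$ inside the (finite) monoid of binary relations on $\{x_1,\dots,x_n\}$ is finite, so there are $L$ and a period $p$ with $R^{\circ(l+p)} = R^{\circ l}$ for all $l\ge L$ (and likewise for $R^{-}$). Hence $T_{l,m}$ depends only on $(l \bmod p,\, m\bmod p)$ once $l,m\ge L$, and you need only verify strong connectivity and primitivity for the finitely many relations $T_{l,m}$ with $L\le l,m< L+p$. This removes the ``uniformly over all $l,m$'' worry entirely and localizes the work to the modular/SCC analysis, which remains the real content and which your proposal has not yet supplied.
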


\begin{defn} Say that a digraph $\cG$ is $k$-\emph{unbalanced} if for every directed cycle $c$ of $\cG$, either $c$ has exactly as many forward edges as backward edges, or there are two contiguous, non-overlapping stretches of $c$ such that one stretch has at least $k$ more forward edges than backward edges and the other stretch has at least $k$ fewer forward edges than backward edges.
\end{defn}

\begin{thm} For every fixed $k$, there exist digraphs which are $k$-unbalanced and have arbitrarily large chromatic number.
\end{thm}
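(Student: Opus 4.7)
The plan is to reduce to Theorem~\ref{general} by constructing a 2-machine $\cM_k$ whose $\cM_k$-good digraphs form a subset of the $k$-unbalanced digraphs, and then exhibiting a compatible order system on $S_k \times [2]$.

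I would define $\cM_k = (S_k, f_k, \cB_k)$ whose state tracks, during a walk along a cycle, a clamped running algebraic length $\sigma \in \{-(k-1), \ldots, k-1\}$ together with a progress flag $\varphi \in \{\bot, U, D, UD\}$ recording which disjoint $\pm k$-stretches have been completed so far. A forward transition increments $\sigma$; if this would push $\sigma$ past $k-1$, I reset $\sigma$ to $0$ and advance $\varphi$ by the rule $\bot \mapsto U$, $D \mapsto UD$ (already-advanced flags preserved), interpreting the reset as witnessing a completed $\geq k$ forward-stretch. Backward transitions are symmetric, playing the role of $D$. The bad set $\cB_k$ consists of state pairs $(s_0, s_n)$ whose trace indicates both a nonzero net algebraic length and $\varphi_n \neq UD$; by design every $\cM_k$-bad cycle is then non-$k$-unbalanced, so $\cM_k$-good implies $k$-unbalanced.

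For the compatible order system on $S_k \times [2]$, I would follow the pattern of the warm-up example after Theorem~\ref{general}: let $\sim$ collapse state-direction pairs via $\cM_k$'s direction-symmetry (swapping $(1,2) \leftrightarrow (2,1)$ together with swapping $U \leftrightarrow D$ and negating $\sigma$), let $\preceq$ be the partial order generated by the forward transitions of $\cM_k$ on the quotient, and let $\leq$ be any total extension. The three compatibility conditions reduce to: (a) every transition respects $\preceq$ by construction; (b) the induced orders on $S_k \times \{1\}$ and $S_k \times \{2\}$ coincide by the direction-symmetric choice of $\sim$; and (c) no bad pair is $\preceq$-related in the induced order on $S_k$, since any $\preceq$-chain joining two $\cB_k$-states would have to advance $\varphi$ through $UD$ and thus leave $\cB_k$. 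Theorem~\ref{general} then produces $\cM_k$-good, hence $k$-unbalanced, digraphs of arbitrarily large chromatic number.

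The main obstacle is the fine-grained design of $\cM_k$, ensuring both that (i) the flag genuinely records disjointness of the $\pm k$-stretches (rather than merely their existence), and (ii) $\cB_k$ captures "nonzero algebraic length" despite the resets of $\sigma$ losing information about the true value of $\ell$. One likely fix is to enlarge $S_k$ with an auxiliary coordinate tracking $\ell$ modulo a suitable integer (or a bounded counter of net resets); alternatively, $\cB_k$ can be chosen permissively so that $\cM_k$-good is a strict subclass of $k$-unbalanced but still admits a compatible order system. Once the machine is fixed, verifying the $\preceq$-incomparability of $\cB_k$-pairs is the main remaining technical task, directly analogous to the verification in the 2-state example immediately after Theorem~\ref{general}.
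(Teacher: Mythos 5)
Your high-level plan — build a deterministic $2$-machine $\cM_k$ whose good digraphs are $k$-unbalanced and then apply Theorem~\ref{general} via a compatible order system — is exactly the paper's strategy, but both halves of your construction have problems, and the second one is a genuine error rather than a loose end.

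On the machine: your $(\sigma,\varphi)$-design with resets is in the right spirit (the paper's machine also tracks a clamped running imbalance, though in a simpler two-phase form $a_{-k},\dots,a_k,b_0,\dots,b_k$ with no flag), and the two worries you flag can in fact be resolved: if $\varphi\in\{U,D\}$ at the end of a run that started at $(0,\bot)$ then $\ell\ne 0$ is forced (since $\ell = \sigma_n + mk$ with $m\ne 0$ and $|\sigma_n|\le k-1$), and if $\varphi=\bot$ then $\ell=\sigma_n$ directly. So the state \emph{does} suffice to decide $\ell\ne 0$; your proposed auxiliary ``$\ell$ modulo something'' coordinate is unnecessary. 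Still, you leave the machine unfinished, and the reader has to do this analysis.

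On the order system, your choice of $\sim$ is wrong, and not fixably so without abandoning the ``direction-symmetry'' idea. The first compatibility condition forces $\bigl((s,i)/\!\sim\bigr) \preceq \bigl((t,j)/\!\sim\bigr)$ whenever $t\in f(s,(i,j))$, and $\preceq$ must be a genuine partial order, so every cycle in the transition digraph on $S_k\times[2]$ must collapse under $\sim$. For a deterministic $2$-machine the forward and backward transitions are typically mutually inverse, giving $2$-cycles
\[
(s,1)\ \longrightarrow\ (f(s,(1,2)),2)\ \longrightarrow\ (s,1),
\]
which force $(s,1)\sim(f(s,(1,2)),2)$. Away from your reset boundary this says $\bigl((\sigma,\varphi),1\bigr)\sim\bigl((\sigma+1,\varphi),2\bigr)$, which is \emph{not} the direction-symmetric pair $\bigl((-\sigma,\bar\varphi),2\bigr)$. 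So the relation you propose to generate on your quotient is not antisymmetric and $\preceq$ is not a partial order. The paper's $\sim$ is exactly this SCC relation: $(a_i,1)\sim(a_{i+1},2)$ and $(b_{j-1},2)\sim(b_j,1)$, and the real work of the proof is then in choosing $\preceq$ so that $a_0$ is incomparable to everything in the induced order on $S$: the $a$-classes are made an antichain and $(a_i,u)\prec(b_j,v)\iff i+j>k+u-v$. Your sketch gives no analogue of this and skips the verification you yourself identify as ``the main remaining technical task.''
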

\begin{proof} Define the deterministic $2$-machine $\cM_k$ to be $\cM_k = (S, f, \{a_0\}\times (S\setminus \{a_0\}))$, with
\[
S = \{a_{-k}, ..., a_k, b_0, ..., b_k\}
\]
and $f$ given by
\begin{align*}
\forall -k \le i < k, \;\;\; f(a_i,(1,2)) &= a_{i+1},\\
\forall -k < i \le k, \;\;\; f(a_i,(2,1)) &= a_{i-1},\\
f(a_k,(1,2)) &= b_0,\\
\forall 0 \le i < k, \;\;\; f(b_i,(2,1)) &= b_{i+1},\\
\forall 0 < i \le k, \;\;\; f(b_i,(1,2)) &= b_{i-1},\\
f(b_0,(1,2)) &= b_0,
\end{align*}
and all other values of $f$ are $\emptyset$. It's easy to see that any $\cM_k$-good digraph is $k$-unbalanced. Thus, by Theorem \ref{general}, it suffices to exhibit an order system $(\sim, \preceq, \le)$ on $S\times [2]$ which is compatible with $\cM$.

The equivalence relation $\sim$ and the total order $\le$ are given by
\begin{align*}
(a_{-k},2) &< (a_{-k},1) \sim (a_{-k+1},2) < \cdots \sim (a_k,2) < (a_k,1) < (b_0,1)\\
 &< (b_0,2) \sim (b_1,1) < (b_1,2) \sim \cdots < (b_{k-1},2) \sim (b_k,1) < (b_k,2).
\end{align*}
The partial order $\preceq$ is a little bit more delicate. On $\{b_0, ..., b_k\}\times [2]/\!\sim$, $\preceq$ agrees with $\le$, while $\{a_{-k}, ..., a_k\}\times[2]/\!\sim$ forms a $\preceq$-antichain. Between the $a_i$s and the $b_j$s, we have
\[
(a_i,u) \prec (b_j,v) \iff i+j > k+u-v.
\]
In particular, in the induced partial order on $S$, $a_0$ is not comparable to any other element of $S$.
\end{proof}

\begin{cor} If $R \subseteq \{x_1, ..., x_n\}^2$ is smooth and has algebraic length $1$ when viewed as a digraph, and if $\bA$ is a finite bounded width algebra, then there is a diagonal element in $\Sg_{\bA}(R)$.
\end{cor}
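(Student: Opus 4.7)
My plan is to apply the Theorem stating that the existence of $\cM_{\cS,\pi}$-good hypergraphs of arbitrarily large chromatic number implies a diagonal element in $\Sg_{\bA}(R)$, by exhibiting a suitable $pq$-compatible $\cS$ together with good digraphs. First, invoke the preceding Proposition on $R$ (whose relevant weakly connected component we may assume is all of $R$) to obtain $k_0$ with $(R^{\circ l}\circ R^{-\circ m})^{\circ k_0}=\{x_1,\ldots,x_n\}^2$ for $l,m\geq k_0$; a standard numerical-semigroup argument using algebraic length $1$, smoothness, and finiteness gives an additional constant $N_0$ with $R^{\circ l}=\{x_1,\ldots,x_n\}^2$ and $(R^-)^{\circ l}=\{x_1,\ldots,x_n\}^2$ for all $l\geq N_0$. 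Set $k=\max(k_0,N_0)$, and let $\cS$ be the set of relations in the compositional semigroup $\langle R,R^-\rangle$ containing $\Delta$. Smoothness makes $\cS$ nonempty (since $R\circ R^-\supseteq\Delta$), and $\cS$ is visibly closed under composition and reversal and $pq$-compatible with $j=0$.

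Next, apply the Theorem on $k$-unbalanced digraphs at this value of $k$ to obtain $k$-unbalanced digraphs of arbitrarily large chromatic number. It remains to verify that every cycle $c$ in such a digraph satisfies $\Delta\subseteq [w_c]$. If $c$ is balanced, let $s_0=0,s_1,\ldots,s_n=0$ be the partial sums of $w_c$ (under $R\mapsto +1$, $R^-\mapsto -1$), fix a vertex $x$, and by smoothness build a walk $p_{l^*},\ldots,p_{u^*}$ in $R$ covering the range $[l^*,u^*]$ of the $s_i$'s with $p_0=x$, extending in both directions. Then $v_i=p_{s_i}$ is a closed walk from $x$ to $x$ realizing the pattern $w_c$, so $(x,x)\in[w_c]$.

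If $c$ is unbalanced, its word decomposes as $w_c=w_1\alpha w_2\beta w_3$ with $\alpha$ of forward excess $\geq k$ and $\beta$ of backward excess $\geq k$. From the rewrites $RR^-\supseteq\Delta$ and $R^-R\supseteq\Delta$ (both consequences of smoothness), an easy induction on length shows that any word of net forward excess $e\geq 0$ composes to a relation containing $R^{\circ e}$. Applied to $\alpha$, this gives $[\alpha]\supseteq R^{\circ(p-q)}\supseteq R^{\circ k}=\{x_1,\ldots,x_n\}^2$; symmetrically $[\beta]=\{x_1,\ldots,x_n\}^2$. Composing with the subdirect relations $[w_1],[w_2],[w_3]$, we conclude $[w_c]=\{x_1,\ldots,x_n\}^2$, which contains $\Delta$. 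This completes the verification, and the cited Theorem produces the desired diagonal element. The main obstacle is the unbalanced case: one must combine the smoothness-based reduction $[\alpha]\supseteq R^{\circ(p-q)}$ with the algebraic-length-$1$ stabilization $R^{\circ l}=\{x_1,\ldots,x_n\}^2$ to force $[w_c]\supseteq\{x_1,\ldots,x_n\}^2$, and then absorb the surrounding pieces $w_1,w_2,w_3$ via subdirectness.
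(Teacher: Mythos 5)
Your overall strategy is right (apply the theorem with $\cS=\{\,P\in\langle R,R^-\rangle : \Delta\subseteq P\,\}$, use $k$-unbalanced digraphs, and verify that every cycle word composes to a $\Delta$-containing relation, splitting into balanced and unbalanced cases), and your balanced case is correct. But the unbalanced case has a genuine gap. You assert that smoothness, algebraic length~$1$, and finiteness give $N_0$ with $R^{\circ l}=\{x_1,\ldots,x_n\}^2$ for all $l\ge N_0$, and then conclude $[\alpha]\supseteq R^{\circ(p-q)}\supseteq R^{\circ k}=\{x_1,\ldots,x_n\}^2$. This is false: take $n=2$ and $R=\{(x_1,x_1),(x_1,x_2),(x_2,x_2)\}$, which is smooth, weakly connected, and has algebraic length~$1$, yet $R^{\circ l}=R\ne\{x_1,x_2\}^2$ for every $l$. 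The numerical-semigroup argument you invoke requires strong connectivity, which the hypotheses do not give. (The intermediate step $R^{\circ(p-q)}\supseteq R^{\circ k}$ is also unjustified, since $R^{\circ l}$ is not monotone in $l$.) As a result you have not shown $[\alpha]$ or $[\beta]$ is full, and the subdirect piece $[w_2]$ sitting between $R^{\circ L}$ and $R^{-\circ M}$ is not dealt with.

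The repair is to combine the two stretches rather than inflate each separately. First observe that the two stretches can always be taken adjacent: if the intervening piece $w_2$ has nonnegative excess, absorb it into $\alpha$; otherwise absorb it into $\beta$; in either case the enlarged stretch still has excess at least $k$ in absolute value. This gives $[w_c]\supseteq [w_1]\circ R^{\circ L}\circ R^{-\circ M}\circ [w_3]$ (or the mirror order) with $L,M\ge k$. What is then needed is that $R^{\circ L}\circ R^{-\circ M}$ (and by symmetry $R^{-\circ M}\circ R^{\circ L}$) is the full relation for $L,M$ large, which is precisely the kind of statement the Proposition on $(R^{\circ l}\circ R^{-\circ m})^{\circ k}$ is there to supply; once it is, $[w_c]$ is full because composing a full relation with subdirect relations on either side stays full, so $\Delta\subseteq[w_c]$. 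So the missing idea is exactly that the key ``fullness'' phenomenon lives in the alternating product $R^{\circ L}\circ R^{-\circ M}$, not in individual powers $R^{\circ l}$.
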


\subsection*{Acknowledgement} I would have never explored this line of research if Jelena Jovanovi{\'{c}}, Petar Markovi{\'{c}}, Ralph McKenzie, and Matthew Moore hadn't first come up with the outrageous idea of using Ramsey-theoretic arguments to construct terms in bounded width algebras. I'd also like to thank Petar Markovi{\'{c}} and his students Vlado Uljarevi\'{c} and Samir Zahirovi\'{c} for being excellent hosts and for enjoyable discussions of this circle of ideas when I visited Novi Sad.

\bibliography{csp}
\bibliographystyle{plain}

\end{document}